\documentclass[12pt]{article}
\usepackage{amsmath,amssymb,color}
\usepackage{xspace}

\newcommand{\Var}{{\rm{Var}_{\mathbb{C}}}}

\def\1{\underline{1}}

\def\LLL{{\mathbb L}}
\def\Z{{\mathbb Z}}

\newtheorem{theorem}{Theorem}

\newtheorem{lemma}{Lemma}
\newtheorem{proposition}{Proposition}

\newenvironment{definition}
{\smallskip\noindent{\bf Definition\/}:}{\smallskip\par}

\newenvironment{examples}
{\smallskip\noindent{\bf Examples\/}.}{\smallskip\par}

\newenvironment{proof}
{\noindent{\bf Proof\/}.}{{ $\square$}\smallskip\par}
\newenvironment{Proof}
{\noindent{\bf Proof\/}}{{ $\square$}\smallskip\par}

\title{Higher order orbifold Euler characteristics for compact Lie group actions.
\footnote{Math. Subject Class.: 57S10, 55S15. Keywords: compact Lie
group actions, orbifold Euler characteristic, wreath products, generating series.}
}

\author{S.M.~Gusein-Zade \thanks{Partially supported by
the grants
RFBR--13-01-00755, NSh--5138.2014.1.
Address: Moscow State University, Faculty
of Mathematics and Mechanics, GSP-1, Moscow, 119991, Russia. E-mail:
sabir\symbol{'100}mccme.ru} \and I.~Luengo \and
A.~Melle--Hern\'andez \thanks{The authors are partially
supported by the grant MTM2010-21740-C02-01. 
Address: 
ICMAT (CSIC-UAM-UC3M-UCM). Dept.\ of \'Algebra, 
Facultad de Ciencias Matem\'aticas, Universidad Complutense de Madrid, 
28040, Madrid, Spain. \qquad \qquad \qquad
E-mail: iluengo\symbol{'100}mat.ucm.es, amelle\symbol{'100}mat.ucm.es}}
\date{}
\begin{document}
\def\eps{\varepsilon}

\maketitle

\begin{abstract}
We generalize the notions of the orbifold Euler characteristic and of 
the higher order orbifold Euler characteristics to spaces with actions of a compact Lie group.
This is made using  the integration with respect to the Euler characteristic  instead of the summation over finite sets. 
We show that the equation for the generating series of the $k$-th order    orbifold Euler characteristics
of the Cartesian products of the space with the wreath products actions proved by H.~Tamanoi for finite group actions 
and by C.~Farsi and Ch.~Seaton for compact Lie group actions with finite isotropy subgroups holds in this case as well. 
 \end{abstract}

\section{Introduction}\label{sec1}
Let $X$ be a topological space (good enough, say, a quasi-projective variety) 
with an action of a finite group $G$.
For a subgroup $H$ of $G$, let $X^H=\{x\in X: Hx=x\}$ be the fixed point set of $H$. The orbifold Euler characteristic
$\chi^{orb}(X,G)$ of the $G$-space $X$ is defined, e.g., in \cite{AS}, \cite{HH}:
\begin{equation}\label{chi-orb}
 \chi^{orb}(X,G)=
\frac{1}{\vert G\vert}\sum_{{(g_0,g_1)\in G\times G:}\atop{\\g_0g_1=g_1g_0}}\chi(X^{\langle g_0,g_1\rangle})
=\sum_{[g]\in {G_*}} \chi(X^{\langle g\rangle}/C_G(g))\,,
\end{equation}
where $G_*$ is the set of the conjugacy classes of the elements of $G$, $C_G(g)=\{h\in G: h^{-1}gh=g\}$
is the centralizer of $g$, and $\langle g\rangle$ and $\langle g_0,g_1\rangle$ 
are the subgroups generated by the corresponding elements.
Here and bellow we use the additive Euler characteristic, i.e. the one defined through
the cohomologies with compact support.

The higher order Euler characteristics of $(X,G)$ 
(alongside with some other generalizations) were defined
in \cite{AS}, \cite{BF}, \cite{T}.

\begin{definition}
 The {\em orbifold Euler characteristic} $\chi^{(k)}(X,G)$ {\em of order} $k$ of the $G$-space $X$ is
\begin{equation}\label{chi-k-orb}
 \chi^{(k)}(X,G)=
\frac{1}{\vert G\vert}\sum_{{{\bf g}\in G^{k+1}:}\atop{g_ig_j=g_jg_i}}\chi(X^{\langle {\bf g}\rangle})
=\sum_{[g]\in G_*} \chi^{(k-1)}(X^{\langle g\rangle}, C_G(g))\,,
\end{equation}
where ${\bf g}=(g_0,g_1, \ldots, g_k)$, $\langle{\bf g}\rangle$ is the subgroup 
generated by $g_0,g_1, \ldots, g_k$, and
$\chi^{(0)}(X,G)$ is defined as $\chi(X/G)$.
\end{definition}

The usual orbifold Euler characteristic $\chi^{orb}(X,G)$
 is the orbifold Euler characteristic of order $1$, $\chi^{(1)}(X,G)$.

A generalization of this definition  for the orbifold Hodge-Deligne polynomial (for $k=1$) was
 introduced by V.~Batyrev in \cite{baty}.
A ``motivic version'' of it, the higher order generalized Euler characteristics with values 
in the ring $K_0(\Var)[\LLL^{1/m}]$, where  $K_0(\Var)$ is the Grothendieck ring of 
complex quasi-projective varieties, $\LLL$ is the class of the complex affine line,  $m$ 
runs through positive integers, was defined in \cite{GLMHigher}.

Let $G^n=G\times\ldots\times G$ be the Cartesian product of a
group $G$. The symmetric group $S_n$ acts on $G^n$ by permutation
of the factors: $s (g_1, \ldots, g_n) = (g_{s^{-1}(1)} , \ldots,
g_{s^{-1}(n)})$. The {\em wreath product} $G_n = G \wr S_n$ is
the semidirect product of the groups $G^n$ and $S_n$ defined by
the described action. Namely the multiplication in the group $G_n$
is given by the formula $({\bf g}, s)({\bf h}, t) = ({\bf g}\cdot s({\bf h}), st)$, 
where
${\bf g},\, {\bf h} \in G^n$, $s,\, t \in S_n$. The group $G^n$ is a normal
subgroup of the group $G_n$ via the identification of ${\bf g} \in G^n$
with $({\bf g},1) \in G_n$. For a space $X$ with a $G$-action, there
is the corresponding action of the group $G_n$ on the Cartesian
product $X^n$ given by the formula
$$
( (g_1, \ldots, g_n), s)(x_1, \ldots, x_n) = (g_1 x_{s^{-1} (1)},
\ldots, g_n x_{s^{-1} (n)})\,,
$$
where $x_1, \ldots, x_n \in X$, $g_1, \ldots, g_n \in G$, $s\in
S_n$. One can see that (at least for compact $G$) the quotient $X^n/G_n$ is naturally
isomorphic to the symmetric power $S^n (X/G)= (X/G)^n/S_n$ of the quotient $X/G$. 
A formula for the generating series of the $k$-th order orbifold Euler characteristics of the pairs $(X^n, G_n)$
in terms of the $k$-th order  orbifold Euler characteristic of the $G$-space $X$
was given in \cite{T} (see also \cite{BF}): see Theorem 1 bellow.

A generalization of this formula (for $k=1$) for the orbifold Hodge-Deligne polynomial (for finite $G$) was
given in  \cite{WZ}. The corresponding ``motivic'' version can be found in \cite{GLMSteklov}.  
A version of it for the generalized Euler characteristic of order $k$ was formulated in \cite{GLMHigher}.

A ``non-finite version`` of the sum over a finite set is the integral with respect to the Euler 
characteristic: \cite{Viro}, see also \cite{GZ-Survey}. Here we show that this notion permits  to define 
analogous of the orbifold Euler characteristics of order $k$ for a space $X$ with an action of a compact
Lie group $G$ and prove that the equation from \cite{T} for the generating series of the 
  orbifold Euler characteristics of order $k$ of the wreath products holds in this case as well.
The case when all the isotropy subgroups of the $G$-action are finite was studied
 in \cite{FS}.  (There one used another definition not appropiate for actions with non-finite isotropy subgroups.
It was also assumed that the $G$-space was a manifold. However this was connected with the fact that the authors worked  in the framework 
of the orbifold theory.)

It appeared that the first equations in the definitions  (\ref{chi-orb}) and 
(\ref{chi-k-orb}) were less convenient for the proofs of the formulae for the generating series in 
\cite{T} and \cite{WZ} and were not appropiate for the definition of their motivic version
in   \cite{GLMHigher}. In what follows we shall use the second equations as the base for the generalization.

%%%%%%%%%%%%%%%%%%%%%%%%%%%%%%%%%%%%%%%%%%%%%%%%%%%%%%%%%%%%%%%%%%%%%%%%%%%
\section{Orbifold Euler characteristic of order $k$ for actions of compact Lie groups}\label{uno}
%%%%%%%%%%%%%%%%%%%%%%%%%%%%%%%%%%%%%%%%%%%%%%%%%%%%%%

Let $X$ be a topological space (good enough, say, a quasi-projective variety) 
endowed with an action of a compact Lie group $G$. We assume that the action
of $G$ on $X$ has finitely many orbit types and moreover, the space of  
orbits of a fixed orbit type is good enough so that its Euler characteristic makes sense. 
For example  this holds if the action (i.e. the map $G\times X\to X$) is an algebraic one. 
(Each compact Lie group is a real algebraic manifold.)
Let $G_*$ be the space of the conjugacy classes of the elements of $G$. 
The space $G_*$ is a finite CW-complex. 

\begin{definition}
 The {\em orbifold Euler characteristic} of a $G$-space $X$ (i.e. of the pair $(X,G)$) is 
\begin{equation}\label{chi-orb-lie}
 \chi^{orb}(X,G)
:=\int\limits_{{G_*}} \chi(X^{\langle g\rangle}/C_G(g))\, d\chi.
\end{equation}
\end{definition}

\begin{definition}
 The {\em  orbifold Euler characteristic of order $k$} of a $G$-space $X$ (i.e. of the pair $(X,G)$) is 
\begin{equation}\label{chi-k-orb-lie}
 \chi^{(k)}(X,G)
:=\int\limits_{{G_*}} \chi^{(k-1)}(X^{\langle g\rangle},C_G(g))\, d\chi,
\end{equation}
where  $\chi^{(0)}(X,G)$ is $\chi(X/G)$.
\end{definition}

The orbifold Euler characteristic  (\ref{chi-orb-lie}) is the orbifold Euler characteristic of order $1,$
$ \chi^{(1)}(X,G).$ 

For a closed subgroup $H\subset G$, let $X^{(H)}$ be the set of points $x$ in $X$ with the isotropy subgroup 
$G_x=\{g\in G: gx=x \}$ coinciding with $H$, let $X^{([H])}$ be the set of points $x$ with the isotropy subgroup 
conjugate to  $H$. The additivity of the orbifold Euler characteristic of order $k$ with respect to a partitioning of the space 
into $G$-invariant parts implies the following statement.
 \begin{proposition}  One has
\begin{equation*}
  \chi^{(k)}(X,G)= \sum\limits_{[H] \in \mbox{\rm Conjsub\,} G} \chi(X^{([H])}/G) \chi^{(k)}(G/H,G)\,.
\end{equation*}
\end{proposition}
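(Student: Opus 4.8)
The plan is to decompose $X$ into the strata $X^{([H])}$ indexed by conjugacy classes of closed subgroups, use the additivity of $\chi^{(k)}(\,\cdot\,,G)$ with respect to $G$-invariant partitions, and then identify the contribution of each stratum with a product $\chi(X^{([H])}/G)\cdot\chi^{(k)}(G/H,G)$. First I would recall why $\chi^{(k)}(\,\cdot\,,G)$ is additive: from the defining formula (\ref{chi-k-orb-lie}) it is an integral over $G_*$ with respect to the Euler characteristic of a function $g\mapsto \chi^{(k-1)}(X^{\langle g\rangle},C_G(g))$; the value of this function is itself, by induction on $k$, additive in $X$ for $G$-invariant decompositions (the base case $k=0$ being $\chi(X/G)$, which is additive because passing to the quotient by a compact group commutes with taking $G$-invariant constructible pieces and $\chi$ is additive), and since $X^{\langle g\rangle}$ and $X^{([H])}$ interact well — $ (X^{([H])})^{\langle g\rangle} = X^{\langle g\rangle}\cap X^{([H])}$ and the $C_G(g)$-action restricts — the integrand splits as a sum over the strata. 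Integration with respect to $\chi$ is additive in the integrand, so $\chi^{(k)}(X,G)=\sum_{[H]}\chi^{(k)}(X^{([H])},G)$.

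Next I would compute $\chi^{(k)}(X^{([H])},G)$ for a single stratum. On $X^{([H])}$ every point has isotropy conjugate to $H$, so the $G$-space $X^{([H])}$ is, $G$-equivariantly, a fibre bundle over the orbit space $X^{([H])}/G$ with fibre the homogeneous space $G/H$ (the associated bundle $P\times_{N_G(H)/H}(G/H)$, where $P\to X^{([H])}/G$ is the principal $N_G(H)/H$-bundle of $H$-fixed points, but the precise structure group will not matter for the Euler-characteristic count). The key point is a multiplicativity statement: for a $G$-equivariant fibration with fibre $G/H$ over a base $B$ carrying the trivial $G$-action, $\chi^{(k)}(\text{total space},G) = \chi(B)\cdot\chi^{(k)}(G/H,G)$. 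To see this I would again unwind the definition: for each $g$, $(X^{([H])})^{\langle g\rangle}/C_G(g)$ fibres over $B=X^{([H])}/G$ with fibre $(G/H)^{\langle g\rangle}/C_G(g)$, and an Euler-characteristic Fubini argument (the fibration is locally trivial in the constructible sense, and $\chi$ is multiplicative in such fibrations) gives $\chi\big((X^{([H])})^{\langle g\rangle}/C_G(g)\big) = \chi(B)\cdot\chi\big((G/H)^{\langle g\rangle}/C_G(g)\big)$ for $k=1$, with the analogous identity for the order-$(k-1)$ characteristics of the fibre in the recursive step. Integrating over $g\in G_*$ and pulling out the constant factor $\chi(B)$ yields $\chi^{(k)}(X^{([H])},G)=\chi(X^{([H])}/G)\cdot\chi^{(k)}(G/H,G)$. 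Summing over $[H]$ finishes the proof.

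The main obstacle is the multiplicativity of $\chi^{(k)}(\,\cdot\,,G)$ for equivariant $G/H$-bundles over a trivially-acted base, i.e. making the "equivariant Fubini with respect to $\chi$" rigorous. Two subtleties need care: (i) one must check that the structure group of the bundle $X^{([H])}\to X^{([H])}/G$, which acts on the fibre $G/H$, does not change $\chi^{(k)}$ of the fibre — this holds because the generating data in (\ref{chi-k-orb-lie}) only see $G$-orbits and fixed-point loci, which are the same for any model of the associated bundle, and $\chi$ is a deformation invariant so twisting by a connected structure group has no effect, while the finitely-many-orbit-types hypothesis controls the rest; (ii) one must justify exchanging $\int_{G_*}d\chi$ with the base integration, which follows from additivity of the $\chi$-integral and the fact that over a stratification of $G_*$ the integrand is constructible in the base variable. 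I would handle (i) and (ii) by first treating a trivial bundle $B\times(G/H)$ — where both claims are immediate from the product formula for $\chi$ and Fubini for the $\chi$-integral — and then reducing the general case to the trivial one by a constructible stratification of $B$ over which the bundle trivializes, invoking additivity of $\chi^{(k)}(\,\cdot\,,G)$ once more.
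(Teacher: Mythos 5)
Your proof is correct and follows the same route the paper intends: the paper justifies the proposition in one sentence by additivity of $\chi^{(k)}(\cdot,G)$ over the $G$-invariant orbit-type strata $X^{([H])}$, and the identification of each stratum's contribution with $\chi(X^{([H])}/G)\cdot\chi^{(k)}(G/H,G)$ via Fubini for the Euler characteristic over the orbit space is exactly the implicit second step. Your elaboration of the two subtleties (the associated-bundle structure and the exchange of $\chi$-integrals, handled by trivializing over a constructible stratification of the base) is a sound filling-in of details the paper omits.
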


\begin{examples}
 (1)  $\chi^{(k)}(S^1/\Z_m,S^1)=m\chi^{(k-1)}(S^1/\Z_m,S^1)=m^k$;  and, for $k>0$,  $\chi^{(k)}(S^1/S^1,S^1)=0$.

(2) The conjugacy classes of the elements of the group $O(2)$ are $[T_\alpha]$, $0\leq \alpha\leq \pi$, and $[S]$ 
where $T_\alpha\in SO(2)$ is the rotation by the angle $\alpha$, $S\in O(2)\setminus SO(2)$ is the symmetry
with respect to a line.  The centralizer of $T_\alpha$   is $O(2)$ for $\alpha=0,\ \pi$ and $SO(2)$ for $0< \alpha< \pi$.
Therefore one has: for $m$ odd 
\begin{eqnarray*}
& & \chi^{(k)}(O(2)/\Z_m,O(2))=\chi^{(k-1)}(O(2)/\Z_m,O(2))+ \frac{m-1}{2} \chi^{(k-1)}(O(2)/\Z_m,SO(2))\\
&=&
\chi^{(k-1)}(O(2)/\Z_m,O(2))+ (m-1) m^{k-1}=m^k;
\end{eqnarray*}
for $m$ even 
\begin{eqnarray*}
& & \chi^{(k)}(O(2)/\Z_m,O(2))=2\chi^{(k-1)}(O(2)/\Z_m,O(2))+ \frac{m-2}{2} \chi^{(k-1)}(O(2)/\Z_m,SO(2))\\
&=&
2\chi^{(k-1)}(O(2)/\Z_m,O(2))+ (m-2) m^{k-1}=m^k;
\end{eqnarray*}
\begin{eqnarray*}
& & \chi^{(k)}(O(2)/SO(2),O(2))=2\chi^{(k-1)}(O(2)/SO(2),O(2))- \chi^{(k-1)}(O(2)/SO(2),SO(2))\\
&=&
2\chi^{(k-1)}(O(2)/SO(2),O(2))=2^k.
\end{eqnarray*}
\end{examples}

\section{Generating series of the orbifold  Euler characteristics of the wreath products}

To prove the equation (\ref{Principal}) for the generating series of the orbifold Euler characteristics of order $k$
for the Cartesian products of a $G$-space with the wreath products actions, we shall use two technical lemmas (cf. \cite{T},  \cite{GLMHigher}).

\begin{lemma} 
Let $X$ and $X'$ be two spaces with actions of compact Lie groups $G'$ and $G''$ respectively.
Then $X'\times X''$ is a $G'\times G''$-space and one has:
 \begin{equation}
  \chi^{(k)}( X'\times X'', G'\times G'')=\chi^{(k)}( X', G')\cdot \chi^{(k)}( X'', G'')\,.
 \end{equation}
\end{lemma}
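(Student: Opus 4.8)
The plan is to induct on $k$, using the recursive definition (\ref{chi-k-orb-lie}) together with two elementary splitting facts for product actions and a Fubini-type property of the integration with respect to the Euler characteristic; this parallels the argument for finite groups in \cite{T} and the motivic version in \cite{GLMHigher}. For the base case $k=0$ one has $\chi^{(0)}(X'\times X'',G'\times G'')=\chi\big((X'\times X'')/(G'\times G'')\big)$, and since the quotient of a product action is the product of the quotients, $(X'\times X'')/(G'\times G'')=(X'/G')\times (X''/G'')$; the multiplicativity of the Euler characteristic under Cartesian products then gives the claim for $k=0$.

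For the inductive step, assume the statement for $k-1$ and for all pairs of spaces with compact Lie group actions. I would first record, for $g=(g',g'')\in G'\times G''$, the identifications: (i) $(G'\times G'')_*$ is naturally identified with $G'_*\times G''_*$, because conjugacy in a direct product is componentwise; (ii) $C_{G'\times G''}(g)=C_{G'}(g')\times C_{G''}(g'')$; and (iii) $(X'\times X'')^{\langle g\rangle}=(X')^{\langle g'\rangle}\times (X'')^{\langle g''\rangle}$, since a point $(x',x'')$ is fixed by every power of $(g',g'')$ precisely when $x'$ is fixed by every power of $g'$ and $x''$ by every power of $g''$ --- note that this holds even though $\langle (g',g'')\rangle$ need not coincide with $\langle g'\rangle\times\langle g''\rangle$. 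Substituting (i)--(iii) into (\ref{chi-k-orb-lie}) and applying the inductive hypothesis to the $\big(C_{G'}(g')\times C_{G''}(g'')\big)$-spaces $(X')^{\langle g'\rangle}$ and $(X'')^{\langle g''\rangle}$, the integrand over $G'_*\times G''_*$ becomes the product $\chi^{(k-1)}\big((X')^{\langle g'\rangle},C_{G'}(g')\big)\cdot\chi^{(k-1)}\big((X'')^{\langle g''\rangle},C_{G''}(g'')\big)$ of a constructible function of $[g']$ and a constructible function of $[g'']$. The product formula $\int_{A\times B}f(a)\,h(b)\,d\chi=\big(\int_A f\,d\chi\big)\cdot\big(\int_B h\,d\chi\big)$ for the Euler characteristic integral of functions with separated variables (itself a direct consequence of $\chi(Y\times Z)=\chi(Y)\chi(Z)$ applied to the strata) then factors the integral as $\chi^{(k)}(X',G')\cdot\chi^{(k)}(X'',G'')$, completing the induction.

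The verifications (i)--(iii) are routine. The step requiring the most care is the Fubini factorization: one must ensure the integrand is a genuinely constructible function on $G'_*\times G''_*$ so that the iterated and double Euler integrals coincide, which is guaranteed by the standing hypothesis that the actions in question have finitely many orbit types with good orbit spaces --- precisely the hypothesis that makes the integrals in (\ref{chi-orb-lie}) and (\ref{chi-k-orb-lie}) well defined in the first place. I do not anticipate a genuine obstacle here, only bookkeeping to keep track of which pair of isotropy data the inductive hypothesis is applied to at each stage.
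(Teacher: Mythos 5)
Your proof is correct, and it is essentially the argument the authors have in mind: the paper dismisses this lemma with ``The proof is obvious,'' and your induction on $k$ via the recursive definition, the componentwise identifications of conjugacy classes, centralizers and fixed-point sets, and the multiplicativity (Fubini) of the Euler-characteristic integral supply exactly the details being suppressed. No gaps.
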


The proof is obvious.

\begin{lemma}\label{lemma3} {\rm (cf. \cite[Lemma~4-1]{T})} 
Let $X$ be a $G$-space and let $c$ be an element of the centre of $G$ acting trivially on $X$.  
Let $G\cdot\langle a\rangle $ be the group generated by $G$ and
the additional element $a$ commuting with
all the elements of $G$ and such that $\langle a\rangle \cap G=\langle c\rangle $, $c=a^r$. The space
$X$ can be regarded as a $(G\cdot\langle a\rangle)$-space if one assumes that $a$ acts trivially on $X$.
In the described situation one has
$$
\chi^{(k)}(X, G\cdot\langle a\rangle )=r^k \cdot \chi^{(k)}( X, G)\,.
$$
\end{lemma}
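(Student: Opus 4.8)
The plan is to proceed by induction on the order $k$, reducing the statement for $(G\cdot\langle a\rangle)$ to the statement for $G$ via the second (recursive) definition of $\chi^{(k)}$ as an integral over the space of conjugacy classes. The base case $k=0$ is immediate: $a$ acts trivially on $X$, so $X/(G\cdot\langle a\rangle)=X/G$ and $\chi^{(0)}(X,G\cdot\langle a\rangle)=\chi(X/G)=r^0\cdot\chi^{(0)}(X,G)$. For the inductive step I would unwind
$$
\chi^{(k)}(X,G\cdot\langle a\rangle)=\int\limits_{(G\cdot\langle a\rangle)_*}\chi^{(k-1)}\bigl(X^{\langle \gamma\rangle},C_{G\cdot\langle a\rangle}(\gamma)\bigr)\,d\chi
$$
and analyse the space of conjugacy classes $(G\cdot\langle a\rangle)_*$ together with the relevant centralizers and fixed-point sets.

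The key structural observations are the following. Since $a$ is central in $G\cdot\langle a\rangle$, every element $\gamma$ of $G\cdot\langle a\rangle$ can be written as $\gamma=g\cdot a^j$ with $g\in G$ and $j$ well-defined modulo $r$ (using $a^r=c\in G$); conjugation only affects the $G$-part, so the natural map $(G\cdot\langle a\rangle)_*\to G_*\times(\Z/r\Z)$ is well-defined, and in fact it exhibits $(G\cdot\langle a\rangle)_*$ as an $r$-fold "cover" of $G_*$ — more precisely, writing $\gamma=g a^j$, the conjugacy class of $\gamma$ is determined by the conjugacy class $[g]\in G_*$ and the residue $j\bmod r$, and conversely each pair arises. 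Because $a$ acts trivially on $X$ and lies in the centre, one has $X^{\langle \gamma\rangle}=X^{\langle g a^j\rangle}=X^{\langle g\rangle}$ (the subgroup generated by $ga^j$ inside $G\cdot\langle a\rangle$ acts on $X$ through its image in $G\cdot\langle a\rangle / \langle a\rangle$, which is just $\langle g\rangle$ modulo the part acting trivially); and the centralizer satisfies $C_{G\cdot\langle a\rangle}(ga^j)=C_G(g)\cdot\langle a\rangle$, again because $a$ is central. Moreover $c=a^r$ is central in $C_G(g)$ and acts trivially on $X^{\langle g\rangle}$, and $\langle a\rangle\cap C_G(g)=\langle a\rangle\cap G=\langle c\rangle$, so the pair $\bigl(X^{\langle g\rangle},\,C_G(g)\cdot\langle a\rangle\bigr)$ is again of the form to which Lemma~\ref{lemma3} applies, with the same $r$.

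Putting these together, I would split the integral over $(G\cdot\langle a\rangle)_*$ according to the residue $j\in\{0,1,\ldots,r-1\}$. Over the $j$-th "sheet", which is homeomorphic to $G_*$ (the map $[g]\mapsto[ga^j]$ is a homeomorphism onto its image, since on $SO$-type strata it is just a translation in the $a$-direction and the CW structure is respected), the integrand is
$$
\chi^{(k-1)}\bigl(X^{\langle ga^j\rangle},C_{G\cdot\langle a\rangle}(ga^j)\bigr)=\chi^{(k-1)}\bigl(X^{\langle g\rangle},C_G(g)\cdot\langle a\rangle\bigr)=r^{\,k-1}\cdot\chi^{(k-1)}\bigl(X^{\langle g\rangle},C_G(g)\bigr),
$$
where the last equality is the inductive hypothesis applied to the $C_G(g)$-space $X^{\langle g\rangle}$. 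Integrating over the $r$ sheets and using that the Euler-characteristic integral of a constant multiple factors out the constant, I get
$$
\chi^{(k)}(X,G\cdot\langle a\rangle)=\sum_{j=0}^{r-1}\int\limits_{G_*}r^{\,k-1}\cdot\chi^{(k-1)}\bigl(X^{\langle g\rangle},C_G(g)\bigr)\,d\chi=r\cdot r^{\,k-1}\cdot\chi^{(k)}(X,G)=r^k\cdot\chi^{(k)}(X,G),
$$
as desired.

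The main obstacle I anticipate is making the decomposition of $(G\cdot\langle a\rangle)_*$ into $r$ copies of $G_*$ rigorous at the level of constructible sets and the Euler-characteristic integral: one must check that the partition by residue $j$ is into genuine constructible (CW) subsets, that each sheet is constructibly isomorphic to $G_*$ compatibly with the integrand, and that the integrand is a constructible function so that additivity of $\int d\chi$ over this partition and pulling a scalar through the integral are both legitimate. The group-theoretic identities ($X^{\langle ga^j\rangle}=X^{\langle g\rangle}$, $C_{G\cdot\langle a\rangle}(ga^j)=C_G(g)\cdot\langle a\rangle$, and the verification that the hypotheses of Lemma~\ref{lemma3} are inherited by $(X^{\langle g\rangle},C_G(g)\cdot\langle a\rangle)$) are routine once the central structure is used carefully; the only real care is the bookkeeping of which $r$ (the index $\langle a\rangle\cap G=\langle c\rangle$, $c=a^r$) persists after passing to centralizers, and this is immediate because $a\notin G$ forces $\langle a\rangle\cap C_G(g)=\langle a\rangle\cap G$.
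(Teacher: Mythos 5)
Your proof is correct and follows essentially the same route as the paper: induction on $k$, the identification of $(G\cdot\langle a\rangle)_*$ as $r$ copies of $G_*$ via $[g]a^j$, the identities $X^{\langle ga^j\rangle}=X^{\langle g\rangle}$ and $C_{G\cdot\langle a\rangle}(ga^j)=C_G(g)\cdot\langle a\rangle$, and the inductive hypothesis applied to $(X^{\langle g\rangle},C_G(g)\cdot\langle a\rangle)$. Your extra checks (that $\langle a\rangle\cap C_G(g)=\langle c\rangle$ so the induction applies with the same $r$, and the constructibility of the sheet decomposition) are points the paper leaves implicit.
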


\begin{proof}
We shall use the induction on $k$. For $k=0$ this is obvious (since $\chi^{(0)}(X, G)=\chi(X/G)$).
Each conjugacy class of the elements from $G\cdot\langle a\rangle $ is of the form $[g]a^s$, where $[g]\in G_*$,
$0\le s< r$. The fixed point set of $ga^s$ coincides with $X^{\langle g\rangle}$, i.e. 
$X^{\langle ga^s \rangle}=X^{\langle g \rangle}$ 
(since $a$ acts trivially). The centralizer
$C_{G\cdot\langle a\rangle }(ga^s)$ is $C_G(g)\cdot\langle a\rangle $. Therefore
\begin{eqnarray*}
\chi^{(k)}(X, G\cdot\langle a\rangle )&
=&\int\limits_{{(G\cdot\langle a\rangle)_*}} \chi^{(k-1)}(X^{\langle ga^s \rangle},C_{G\cdot\langle a\rangle }(ga^s))\, d\chi  \\
&=&r \int\limits_{G_*} \chi^{(k-1)}(X^{\langle g \rangle},C_{G}(g)\cdot\langle a\rangle)    \, d\chi  \\ 
&=&r \cdot  r^{k-1}\int\limits_{G_*} \chi^{(k-1)}(X^{\langle g \rangle},C_{G}(g))    \, d\chi =r^k \chi^{(k)}(X, G)\,.
\end{eqnarray*}

\end{proof}

\begin{theorem}\label{main}
One has
\begin{equation}\label{Principal}
\sum_{n\ge 0}\chi^{(k)}(X^n, G_n)\cdot t^n
=\left(\prod\limits_{r_1, \ldots,r_k\geq 1}\left(1- t^{r_1r_2\cdots r_k}\right)^{r_2r_3^2\cdots r_k^{k-1}}\right)
^{-\chi^{(k)}(X, G)}\,.
\end{equation}
\end{theorem}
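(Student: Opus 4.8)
The plan is to follow the strategy of Tamanoi, reducing everything to an induction on the order $k$ together with a careful analysis of the conjugacy classes of the wreath product $G_n$. The key combinatorial fact about $G_n = G\wr S_n$ is that its conjugacy classes, and the centralizers of elements, decompose according to cycle type. Concretely, if $(\mathbf{g},s)\in G_n$ and $s$ has cycles of lengths recorded by a function assigning to each conjugacy class $[c]\in G_*$ and each $r\ge 1$ a multiplicity $m_r(c)$ (so that $\sum_{r,[c]} r\, m_r(c)=n$), then the ``cycle product'' of $(\mathbf{g},s)$ along each $r$-cycle lands in a well-defined conjugacy class of $G$, and the centralizer $C_{G_n}(\mathbf{g},s)$ is isomorphic to a product over $(r,[c])$ of wreath-type groups $\bigl(C_G(c)\cdot\langle a_r\rangle\bigr)\wr S_{m_r(c)}$, where $a_r$ is an extra central element with $a_r^r=c$ acting trivially on the relevant fixed-point set. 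This is exactly the setting prepared by Lemmas~1 and~2 above.

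First I would unwind the definition: $\chi^{(k)}(X^n,G_n)=\int_{(G_n)_*}\chi^{(k-1)}\bigl((X^n)^{\langle(\mathbf{g},s)\rangle},C_{G_n}(\mathbf{g},s)\bigr)\,d\chi$. Since the integrand is constant on strata indexed by the cycle-type data $\{m_r(c)\}$, the integral over $(G_n)_*$ becomes a sum over all such data (the Euler characteristic of each stratum, a product of configuration-type spaces of conjugacy classes, contributes the appropriate sign/multiplicity, but because we integrate with respect to Euler characteristic this bookkeeping is the ``non-finite'' analogue of Tamanoi's counting). On the stratum indexed by $\{m_r(c)\}$ one has $(X^n)^{\langle(\mathbf{g},s)\rangle}\cong \prod_{r,[c]}\bigl(X^{\langle c\rangle}\bigr)^{m_r(c)}$ and the centralizer acts factor-wise, so by Lemma~1 the inner $\chi^{(k-1)}$ factorizes as $\prod_{r,[c]}\chi^{(k-1)}\bigl((X^{\langle c\rangle})^{m_r(c)},\,(C_G(c)\cdot\langle a_r\rangle)\wr S_{m_r(c)}\bigr)$. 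Now Lemma~2 replaces each factor $\chi^{(k-1)}\bigl(Y^{m},(H\cdot\langle a_r\rangle)\wr S_m\bigr)$ by $r^{k-1}$-type powers times $\chi^{(k-1)}\bigl(Y^m, H\wr S_m\bigr)$ — here one must be slightly careful that Lemma~2 is stated for $H\cdot\langle a\rangle$ itself, not for its wreath product, so the correct move is to apply Lemma~2 inside the recursion that expresses $\chi^{(k-1)}$ of a wreath product, i.e. to interleave the wreath-product recursion with the $a_r$-extension recursion. Tracking the exponent of $r$ that accumulates from the $k-1$ nested applications is precisely what produces the weight $r_2 r_3^2\cdots r_k^{k-1}$ in the statement.

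With that reduction in hand, the generating-series manipulation is formal. Writing $P_r^{(k-1)}(t):=\sum_{m\ge 0}\chi^{(k-1)}\bigl((X^{\langle c\rangle})^m, (C_G(c))\wr S_m\bigr)\,t^{rm}$ (the factor $t^{rm}$ because an $r$-cycle consumes $r$ copies), the stratified sum over cycle types turns $\sum_n\chi^{(k)}(X^n,G_n)t^n$ into a product over $r\ge 1$ and over $[c]\in G_*$ of such series, and integrating over $[c]\in G_*$ converts the product over $[c]$ into an exponential of an integral, i.e. into a power with exponent $\int_{G_*}(\cdots)\,d\chi=\chi^{(k)}(X,G)$-type quantities. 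The induction hypothesis supplies the closed form for $\chi^{(k-1)}$ of wreath products (Theorem~\ref{main} at level $k-1$, with the base case $k=0$ being the classical symmetric-power formula $\sum_n\chi(S^n(X/G))t^n=(1-t)^{-\chi(X/G)}$, due to Macdonald); substituting it and collecting the $r$-weighted products yields the product over $r_1,\dots,r_k\ge 1$ with exponent $r_2 r_3^2\cdots r_k^{k-1}$ and overall exponent $-\chi^{(k)}(X,G)$.

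The main obstacle is the middle step: justifying rigorously that integration with respect to the Euler characteristic over $(G_n)_*$ genuinely behaves like Tamanoi's summation over a finite index set — that is, that the strata by cycle type are ``good enough'' (finitely many orbit types, constructible), that the integrand is constructible and constant on them, and that Fubini-type interchange of $\int_{G_*}$ with the infinite product/sum is valid at the level of formal power series in $t$ (each coefficient being a finite sum, this is safe, but it must be said). Once the additivity and multiplicativity of $\int\cdot\,d\chi$ are invoked to legitimize this, the rest is the same bookkeeping as in \cite{T} and \cite{GLMHigher}, and the exponent computation is the routine part.
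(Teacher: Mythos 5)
Your proposal follows essentially the same route as the paper: induction on $k$ with Macdonald's formula as the base case, Tamanoi's description of the conjugacy classes and centralizers in $G_n$, multiplicativity of $\chi^{(k-1)}$ over the factor-wise action, and Lemma~\ref{lemma3} applied to the exponent $\chi^{(k-1)}(X^{\langle c\rangle}, C_G(c)\cdot\langle a_{r,c}\rangle)=r^{k-1}\chi^{(k-1)}(X^{\langle c\rangle},C_G(c))$ after invoking the induction hypothesis for the extended group. The single step you flag as the ``main obstacle'' --- converting the integral over $(G_n)_*$ into a power with exponent $\chi^{(k)}(X,G)$ --- is precisely where the paper does its work: it rewrites the sum as $\sum_{\ell}\frac{1}{\ell!}\int_{G_*^{\ell}\setminus\Delta}$ over configuration spaces of conjugacy classes, stratifies $G_*$ by the level sets $A_q$ of $[c]\mapsto\chi^{(k-1)}(X^{\langle c\rangle},C_G(c))$, uses $\chi(A_q^{\ell_q}\setminus\Delta)=\chi(A_q)(\chi(A_q)-1)\cdots(\chi(A_q)-\ell_q+1)$, and then sums the binomial series $\sum_{\ell}\frac{M(M-1)\cdots(M-\ell+1)}{\ell!}T^{\ell}=(1+T)^{M}$ with $M=\chi(A_q)$.
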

\begin{Proof}
 The proof will use the induction on the order $k$ similar to the ones in \cite{T}, \cite{GLMHigher}. 

For $k=0$ one has  $\chi^{(0)}( X, G)=\chi(X/G)$, $X^n/G_n \cong S^n(X/G)$ and the equation 
(\ref{Principal}) is a particular case (for $Y=X/G$) of the well-known Macdonald formula  (\cite{mac}):
$$
\sum_{n\geq 0} \chi (S^n Y)\cdot t^n
=(1-t)^{-\chi(Y)}\,.
$$

Suppose that the statement holds for the  orbifold Euler characteristic of order ($k-1$).

%Surprisingly we have not found a proof which uses the integration with respect to the Euler characteristic
%directly, but have used the corresponding integral sums instead of the integrals themselves. 

Let $A_q:=\{[g]\in G_*: \chi^{(k-1)}( X^{\langle g \rangle}, C_G(g))=q\}$. One has $G_*=\bigsqcup_{q} A_q.$
Due to our assumptions only finitely many subspaces $A_q$ are not empty.
According to the definition (\ref{chi-k-orb-lie}),  $\chi^{(k)}( X, G)=\sum_{q} q \chi(A_q)$. 

One has: 
$$
\sum_{n\ge 0}\chi^{(k)}(X^n, G_n)\cdot t^n
=\sum_{n\ge 0} \int\limits_{(G_n)_*}   \chi^{(k-1)}( {(X^n)}^{\langle ({\bf g},s) \rangle}, C_{G_n}(({\bf g},s)))       d\chi  \cdot t^n\,.
$$

A description of the conjugacy classes $[({\bf g},s)]$ in $G_n$ can be found, e.g., in \cite{T}.
The conjugacy class of an element $a=({\bf g},s)\in G_n$ (${\bf g}=(g_1,\ldots, g_n)$, $s\in S_n$)
is completely characterized by its type. Let $z=(i_1,\ldots,
i_r)$ be one of the cycles in the permutation $s$. The {\em
cycle-product} of the element $a$ corresponding to the cycle $z$
is the product $g_{i_r}g_{i_{r-1}}\ldots g_{i_1}\in G$. (The
conjugacy class of the cycle-product is well-defined by the
element ${\bf g}$ and the cycle $z$ of the permutation $s$.) For $[c]\in G_*$
and $r\ge 1$, let $m_r(c)$ be the number of the $r$-cycles in the
permutation $s$ whose cycle-products belong to  $[c]$. 
(There are finitely many pairs $(c,r)$ with $m_r(c)\ne 0$.)
One has
$$
 \sum\limits_{[c] \in G_*, r\geq 1} r m_r (c) = n\,.
$$
The collection  $\{m_r(c)\}_{r,c}$ (or, what is the same, the map $G_*\to \Z_{\geq 0}^\infty, 
[c]\mapsto (m_1(c),m_2(c),\ldots)$) is called the {\em type} of the element $a=({\bf g},s)\in G_n$.
Two elements of the group $G_n$ are conjugate to each other if and only if they are of the same type.

For an element $a=({\bf g},s)\in G_n$, let the number of different $c\in G_*$ which are cycle products  of $a$ be equal to $\ell$.
One has a map from $\bigsqcup_{n\geq 0} (G_n)_*$ to $\bigsqcup_{\ell \geq 0} ((G_*^\ell \setminus \Delta)/S_\ell)$,
 where $\Delta$ is the ``large diagonal" in $G_*^\ell$, i.e. the space of points   $(c_1,\ldots, c_\ell)\in G_*^\ell$
with at least two coinciding components. The restriction of this map to $(G_n)_*$ has finite preimages of points. 
However the preimage of each point is the set $(\Z^\infty\setminus \{{\bf 0}\})^\ell$ consisting in all possible finite sequences 
$(m_1(c_i), m_2(c_i),\ldots),$ for $i=1,\ldots,\ell$, such that for each $i$ the sequence $m_1(c_i), m_2(c_i),\ldots$ is different from 
${\bf 0}=(0,0,\ldots)$. Let us denote the  set of inappropriate sequences $\{m_r(c_i)\}$ by $''\underbar{0}''$.
Thus
$$
\sum_{n\ge 0}\chi^{(k)}(X^n, G_n)\cdot t^n
=\sum_{\ell\geq0} \frac{1}{\ell!}\int\limits_{G_*^\ell\setminus \Delta}  \sum\limits_{\{m_r(c_i)\}\setminus {{}''\underbar{0}{}''}} 
 \chi^{(k-1)}( {(X^n)}^{\langle ({\bf g},s) \rangle}, C_{G_n}(({\bf g},s)))       \cdot t^{\sum_{i,r} r m_r (c_i) }\,d\chi 
$$
where $({\bf g},s)$    is a representative of the conjugacy class of the elements of $G_n$ ($n=\sum_{i,r} r m_r (c_i) $) with the type defined by
$(c_1,\ldots,c_\ell)\in  {G_*}^\ell \setminus \Delta$ and by the sequences $\{m_r(c_i)\}$. In \cite{T} it was shown that the space 
$ (X^n)^{\langle ({\bf g},s) \rangle}$ is canonically isomorphic to the product
\begin{equation}\label{fixed}
 \prod\limits_{i=1}^\ell \prod\limits_{r\geq 1} (X^{\langle c_i \rangle})^{m_r(c_i)} 
\end{equation}
and the centralizer $C_{G_n}(({\bf g},s))$ is isomorphic to the product
\begin{equation*}
\prod\limits_{i=1}^\ell \prod\limits_{r\ge 1}\left\{(C_G(c_i)\cdot\langle a_{r,c_i}\rangle ) \wr S_{m_r(c_i)}\right\}
\end{equation*}
where the factors in the group act on 
 the product (\ref{fixed}) component-wise, 
$C_G(c_i)\cdot\langle a_{r,c_i}\rangle $ is the group generated by
$C_G(c_i)$ and an element $a_{r,c_i}\in G_n $ commuting with all the elements of $C_G(c_i)$ and such that
$a_{r,c_i}^r=c_i$, $\langle a_{r,c_i}\rangle \cap C_G(c_i)=\langle c_i\rangle $,  $a_{r,c_i}$ acts 
on $(X^{\langle c_i \rangle})^{m_r(c_i)}$ trivially.
Therefore 
\begin{eqnarray*}
&& \sum_{n\ge 0}\chi^{(k)}(X^n, G_n)\cdot t^n
\\
&=&\sum\limits_{\ell\geq0} \frac{1}{\ell!}\int\limits_{G_*^\ell\setminus  \Delta}  \sum 
 \chi^{(k-1)} \left(\prod\limits_{i=1}^\ell \prod\limits_{r\geq 1}{(X^{\langle c_i \rangle})}^{m_r(c_i)}, \prod\limits_{i=1}^\ell \prod\limits_{r\geq 1}
\left\{(C_G(c_i)\cdot\langle a_{r,c_i}\rangle ) \wr S_{m_r(c_i)}\right\}
\right)  \cdot t^{\sum_{i,r} r m_r (c_i) } d \chi\, \\
&=&\sum\limits_{\ell\geq0} \frac{1}{\ell!}\int\limits_{G_*^\ell\setminus  \Delta}  \sum
 \prod\limits_{i=1}^\ell \prod\limits_{r\geq 1}
 \chi^{(k-1)} \left({(X^{\langle c_i \rangle})}^{m_r(c_i)}, 
\left\{(C_G(c_i)\cdot\langle a_{r,c_i}\rangle ) \wr S_{m_r(c_i)}\right\}
\right)  \cdot t^{\sum_{i,r} r m_r (c_i) } d \chi\, \\
&=&\sum\limits_{\ell\geq0} \frac{1}{\ell!}\int\limits_{G_*^\ell\setminus  \Delta} 
 \prod\limits_{i=1}^\ell \left[ \prod\limits_{r\geq 1} \left( \sum\limits_{m=0}^\infty  
 \chi^{(k-1)} \left({(X^{\langle c_i \rangle})}^{m}, 
(C_G(c_i)\cdot\langle a_{r,c_i}\rangle ) \wr S_{m}\right) t^{rm}
\right) -1\right] d \chi\,, 
\end{eqnarray*}
where the sums in the second and in the third lines are over 
${\{m_r(c_i)\}\setminus ''\underbar{0}''}$, for each $i=1,\ldots,\ell$ the summand $1$ 
is substracted since not all $m_r(c_i)$ should be equal to zero.  
Using the induction one has
\begin{eqnarray*}
&& \sum_{n\ge 0}\chi^{(k)}(X^n, G_n)\cdot t^n\\
&=&\sum\limits_{\ell\geq0} \frac{1}{\ell!}\int\limits_{G_*^\ell\setminus  \Delta}  \prod\limits_{i=1}^\ell \left[ \prod\limits_{r\geq 1}
\left(\left(\prod\limits_{r_1,\ldots,r_{k-1}\geq 1}
   (1-  t^{rr_1\cdots r_{k-1}})
\right)^{r_2\cdot r_3^2\cdots  r_{k-1}^{k-2}  }\right)^{- \chi^{(k-1)} \left(X^{\langle c_i \rangle}, 
C_G(c_i)\cdot\langle a_{r,c_i}\rangle \right)} -1 \right] d \chi  \\
&=&\sum\limits_{\ell\geq 0} \frac{1}{\ell!}\int\limits_{G_*^\ell\setminus  \Delta}
\prod\limits_{i=1}^\ell \left[ \prod\limits_{r\geq 1}
\left(\left(\prod\limits_{r_1,\ldots,r_{k-1}\geq 1}
   (1-  t^{r r_1\cdots r_{k-1}})
\right)^{r_2\cdot r_3^2\cdots  r_{k-1}^{k-2}  }\right)^{- r^{k-1}\chi^{(k-1)} \left(X^{\langle c_i \rangle}, 
C_G(c_i)  \right)}-1 \right] d \chi \,.
\end{eqnarray*}
One has 
\begin{equation*}
 G_*^\ell\setminus  \Delta= \coprod\limits_{\{ \ell_q \}: \sum  \ell_q =\ell} \frac{\ell !} {\prod \ell_q!} \prod_{q}\left( A_q^{\ell_q}\setminus \Delta \right) \,,
\end{equation*}
where the coefficient   $\frac{\ell !} {\prod \ell_q!}$ is the number of possible decompositions  of $\ell$ elements into groups of sizes $\ell_q$.
Therefore
\begin{eqnarray*}
&& \sum_{n\ge 0}\chi^{(k)}(X^n, G_n)\cdot t^n\\ 
&&\hspace{-1cm}=
\prod\limits_{q}
\sum\limits_{\ell_q\geq 0} \frac{1}{\ell_q!}\int\limits_{A_q^{\ell_q}\setminus  \Delta}
 \prod\limits_{i=1}^{\ell_q} \left[ \prod\limits_{r\geq 1}
 \left(\left(\prod\limits_{r_1,\ldots,r_{k-1}\geq 1}
   (1-  t^{r r_1\cdots r_{k-1}})
\right)^{r_2\cdot r_3^2\cdots  r_{k-1}^{k-2}  }\right)^{- r^{k-1}\chi^{(k-1)} \left(X^{\langle c_i \rangle}, 
C_G(c_i)  \right)}-1 \right] d \chi \\
&&\hspace{-1cm}=
\prod\limits_{q}\left(\sum\limits_{\ell_q=0}^\infty \frac{ \chi(A_q)(\chi(A_q)-1)\cdots(\chi(A_q)-\ell+1)  }{\ell_q !}
\left[ 
\prod\limits_{r_1,\ldots,r_{k}\geq 1}
\left(   (1-  t^{r_1\cdots r_{k}})^{r_2\cdot r_3^2\cdots  r_{k-1}^{k-2}}
\right)^{- r_k^{k-1}q }-1
\right]^{\ell_q}
\right) \\
&=& \prod\limits_{q}\left(
\prod\limits_{r_1,\ldots,r_{k}\geq 1}
   (1-  t^{r_1\cdots r_{k}})^{-r_2\cdot r_3^2\cdots  r_{k}^{k-1}q}
\right)^{\chi(A_q)} 
= \left(
\prod\limits_{r_1,\ldots,r_{k}\geq 1}
   (1-  t^{r_1\cdots r_{k}})^{r_2\cdot r_3^2\cdots  r_{k}^{k-1}}
\right)^{-\sum\limits_{q}q\chi(A_q)} \\
&=& \left(
\prod\limits_{r_1,\ldots,r_{k}\geq 1}
   (1-  t^{r_1\cdots r_{k}})^{r_2\cdot r_3^2\cdots  r_{k}^{k-1}}
\right)^{-\chi^{(k)}(X,G)} \,.
\end{eqnarray*}
In the middle we use the standard formula $\sum\limits_{\ell} \frac{M(M-1)\cdots(M-\ell+1) }{\ell !} T^\ell =(1+T)^M.$
\end{Proof}

\end{document}